\renewcommand{\PrintDOI}[1]{\href{http://dx.doi.org/\detokenize{#1}}{doi: \detokenize{#1}}%
	\IfEmptyBibField{pages}{, (to appear in print)}{}}
\theoremstyle{definition}
\newtheorem{theorem}{Theorem}[section]
\newtheorem{lemma}[theorem]{Lemma}
\newtheorem{corollary}[theorem]{Corollary}
\newtheorem{proposition}[theorem]{Proposition}
\theoremstyle{definition}
\newtheorem{definition}[theorem]{Definition}
\newtheorem{example}[theorem]{Example}
\theoremstyle{remark}
\newtheorem{remark}[theorem]{Remark}
\numberwithin{equation}{section}
\numberwithin{equation}{section}
\title{Singquandle Shadows and their Invariants}
\title{Singquandle Shadows and Singular knot Invariants}
\author{Jose Ceniceros}
\address{Hamilton College, Clinton, NY }
\email{jcenicer@hamilton.edu}
\author{Indu R. Churchill}
\address{State University of New York at Oswego, Oswego, NY }
\email{indurasika.churchill@oswego.edu}
\author{Mohamed Elhamdadi}
\thanks{M.E. was partially supported by Simons Foundation collaboration grant 712462.}
\address{University of South Florida, Tampa, FL }
\email{emohamed@math.usf.edu}
\date{}
\subjclass[2020]{Primary 57K12, 05C38; Secondary 05A15}
\keywords{Quandle polynomial, Singular Knots and Links, Singquandle polynomial}
\dedicatory{}
\begin{document}
\maketitle

\begin{abstract}
We introduce shadow structures for singular knot theory.  Precisely, we define \emph{two} invariants of singular knots and links.  First, we introduce a notion of action of a singquandle on a set to define a shadow counting invariant of singular links which generalize the classical shadow colorings of knots by quandles. We then define a shadow polynomial invariant for shadow structures. Lastly, we enhance the shadow counting invariant by combining both the shadow counting invariant and the shadow polynomial invariant.  Explicit examples of computations are given.   
\end{abstract}

\tableofcontents

\section{Introduction}
Quandles are non-associative algebraic structures that are modeled on the three Reidemester moves in classical knot theory.  Thus, they are appropriate algebraic structures for constructing invariants of knots and links in $3$-space and knotted surfaces in $4$-space.  Quandles were introduced independently by Joyce \cite{Joyce} and Matveev \cite{Matveev} in the 1980s.  Quandles have been investigated in many areas of mathematics such as quasigroups and Moufang loops \cite{E}, the Yang-Baxter equation \cite{CES,CN}, representation theory \cite{EM}, and ring theory \cite{EFT}.  For more information on quandles, the reader is advised to consult the book \cite{EN}.  Knot theory has been extended in several directions, for example, singular knot theory.  In \cite{BL}, connections between Jones type invariants defined in \cite{Jones} and Vassiliev invariants of singular knots defined in \cite{V} were established.  In \cite{JL}, a variation of the Hecke algebra was used to construct a Jones-type invariant for singular knots.  Combinatorial singular knot theory has the so-called generalized Reidemeister moves \cite{JL}.  These generalized moves were used in \cite{CEHN} to extend the concept of quandle to an algebraic structure called \emph{singquandle} to provide invariants for singular knots.  In \cite{BEHY}, generating sets of the generalized Reidemeister moves for oriented singular links were introduced and used to distinguish singular knots and links. In \cite{CCEH}, the quandle cocycle invariant \cite{CJKLS} was extended to oriented singular knots and used to construct a state sum invariant for singular links. Furthermore in \cite{CCE}, the quandle polynomial invariant was extended to the case of singquandles. A singular link invariant  was constructed from the singquandle polynomial and it was shown to generalize the singquandle counting invariant in \cite{CCE}. 

The article is organized as follows.  In Section~\ref{review}, the basics of quandle theory are reviewed, and some examples are given.  Section~\ref{OSC} provides a review of the basic constructions of oriented singquandles as well as the singquandle counting invariant. In Section ~\ref{singpoly}, we discuss the singquandle polynomial and the subsingquandle polynomial, which we introduced in a previous paper \cite{CCE} and used to define a singular link invariant. Section~\ref{SS} defines \emph{singquandle shadows} which is used to generalize the \emph{shadow colorings} of knot diagrams by quandles previously defined in \cites{CN}. Furthermore, the \emph{shadow singquandle polynomial} and the \emph{singquandle shadow polynomial invariant} for a singular link $L$ is defined. In Section~\ref{enhanc}, the shadow counting invariant is used to define an enhanced shadow link invariant by combining the shadow singquandle counting invariant and the shadow singquandle polynomial. Lastly, Section~\ref{examples} examines the strength of the singquandle shadow polynomial invariant. Two examples are provided to illustrate that the singquandle shadow polynomial is sensitive to differences in singular links not detected by the singquandle coloring invariant and singquandle polynomial invariant.

\section{Basics of Quandles}\label{review}

In this paper we will consider only finite quandles and singquandles. 
We'll review the basics of quandles; more details on the topic can be found in  \cites{EN, Joyce, Matveev}.
\begin{definition}\label{quandle}
A set $X$ with binary operation $\ast$ is called a \emph{quandle} if the following three identities are satisfied.
\begin{eqnarray*}
& &\mbox{\rm (i) \ }   \mbox{\rm  For all $x \in X$,
$x* x =x$.} \label{axiom1} \\
& & \mbox{\rm (ii) \ }\mbox{\rm For all $y,z \in X$, there is a unique $x \in X$ such that 
$ x*y=z$.} \label{axiom2} \\
& &\mbox{\rm (iii) \ }  
\mbox{\rm For all $x,y,z \in X$, we have
$ (x*y)*z=(x*z)*(y*z). $} \label{axiom3} 
\end{eqnarray*}
\end{definition}
From Axiom (ii) of Definition~\ref{quandle} we can write the element $x$ as $z \bar{*} y = x$. Notice that this operation $\bar{*}$ defines a quandle structure on $X$. The axioms of a quandle correspond respectively to the three Reidemeister moves of types I, II and III (see \cite{EN} for examples).  In fact, one of the motivations of defining quandles came from knot diagrammatic.

 A {\it quandle homomorphism} between two quandles $(X,*)$ and $(Y,\triangleright)$ is a map $f: X \rightarrow Y$ such that $f(x *y)=f(x) \triangleright f(y) $, where
 $*$ and $\triangleright$ 
 denote respectively the quandle operations of $X$ and $Y$. Furthermore, if $f$ is a bijection, then it is called a 
{\it quandle isomorphism} between $X$ and $Y$. 



\noindent Some typical examples of quandles:
\begin{itemize}
\item
Any non-empty set $X$ with the operation $x*y=x,$ for all $x,y \in X,$ is
a quandle called a  {\it trivial} quandle.
\item
Any group $X=G$ with conjugation $x*y=y^{-1} xy$ is a quandle.

\item
Let $G$ be an abelian group.
For elements  
$x,y \in G$, 
define
$x*y \equiv 2y-x$.
Then $\ast$ defines a quandle
structure on $G$ called \emph{Takasaki} quandle.  In case $G=\mathbb{Z}_n$ (integers mod $n$) the quandle is called {\it dihedral quandle}.
This quandle can be identified with  the
set of reflections of a regular $n$-gon
  with conjugation
as the quandle operation.
\item
Any $\Lambda = (\mathbb{Z }[T^{\pm 1}])$-module $M$
is a quandle with
$x*y=Tx+(1-T)y$, $x,y \in M$, called an {\it  Alexander  quandle}.

\end{itemize}

\vspace{0.5cm}

\section{Oriented Singquandles and the Counting Invariant}\label{OSC}

We will provide a basic overview of an oriented singquandle as well as the singquandle counting invariant. For a detailed construction of oriented singquandle and the singquandle counting invariant, see \cites{BEHY, CEHN}. We will be adopting 
the following conventions at classical and singular crossings.

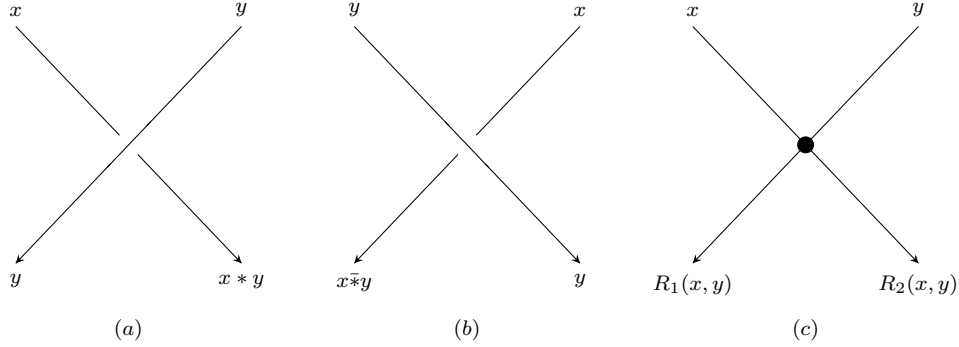
\begin{figure}[h]
\begin{tikzpicture}[use Hobby shortcut, scale=1.5, add arrow/.style={postaction={decorate}, decoration={
  markings,
  mark=at position 1 with {\arrow[scale=1,>=stealth]{>}}}}]
\begin{knot}[
consider self intersections=true,
clip width=5,
 ignore endpoint intersections=false,
]
\strand (1,1) ..(-1,-1.1)[add arrow]; 
\strand (-1,1) ..(1,-1.1)[add arrow];
    
\strand (2,1) ..(4,-1.1)[add arrow];
\strand (4,1) ..(2,-1.1)[add arrow];  
    
\draw (5,1) ..(7,-1.1)[add arrow];
\draw (7,1) ..(5,-1.1)[add arrow];  
\end{knot}
\node[circle,draw=black, fill=black, inner sep=0pt,minimum size=6pt] (a) at (6,-.05) {};
\node[above] at (-1,1) {\tiny $x$};
\node[above] at (1,1) {\tiny $y$};
\node[below] at (1,-1.1) {\tiny $x*y$};
\node[below] at (-1,-1.1) {\tiny $y$};
\node[above] at (2,1) {\tiny $y$};
\node[above] at (4,1) {\tiny $x$};
\node[below] at (2,-1.1) {\tiny $x\bar{*}y$};
\node[below] at (4,-1.1) {\tiny $y$};
\node[above] at (7,1) {\tiny $y$};
\node[above] at (5,1) {\tiny $x$};
\node[below] at (5,-1.1) {\tiny $R_1(x,y)$};
\node[below] at (7,-1.1) {\tiny $R_2(x,y)$};
\node[below] at (0,-1.5) {\tiny $(a)$};
\node[below] at (3,-1.5) {\tiny $(b)$};
\node[below] at (6,-1.5) {\tiny $(c)$};
\end{tikzpicture}
\caption{(a) Coloring of arcs at a positive crossing, (b) coloring of arcs at a negative crossing, (c) colorings of semi-arcs at a singular crossing.}
\label{crossings}
\end{figure}

Generating sets of oriented singular Reidemeister moves were studied and were used to define oriented singquandles in \cite{BEHY}. We will follow the naming convention for oriented singular Reidemeister moves used in \cite{BEHY}. Note that if we let $(S, *)$ be a quandle, we only need to consider the colorings from singular Reidemeister moves in Figures~\ref{The generalized Reidemeister move RIV}, \ref{The generalized Reidemeister move RIVb}, and \ref{The generalized Reidemeister move RV}.
\begin{figure}[h] 
\begin{tikzpicture}[use Hobby shortcut,, scale=1, add arrow/.style={postaction={decorate}, decoration={
  markings,
  mark=at position 1 with {\arrow[scale=1,>=stealth]{>}}}}]
\begin{knot}[
consider self intersections=true,
clip width=4,
 ignore endpoint intersections=false,
 flip crossing/.list={2,3}
]
 	\strand (-1,2) .. (2,-2)[add arrow];
	\strand (2,2) ..  (-1,-2)[add arrow];
	\strand  (.7,-2.4)..(0,-1.5).. (-1,0)..(0,1.5)..(.7,2.4)[add arrow];
\end{knot}

	\node[left] at (-1,2) {\tiny $a$};
	\node[right] at (0,.8){\tiny$a\bar{*}b$};
    \node[left] at (-1,-2) {\tiny $R_1(a\bar{*}b,c)*b$};
    
    \node[right] at (2,2) {\tiny $c$};
    \node[left] at (.5,-2) {\tiny$b$};
    \node[right] at (2,-2) {\tiny $R_2(a \bar{*}b,c)$};
    \node[circle,draw=black, fill=black, inner sep=0pt,minimum size=6pt] (a) at (.5,0) {};
    \draw [very thick, <->] (3,0) -- (4,0);

\begin{knot}[
consider self intersections=true,
clip width=4,
 ignore endpoint intersections=false,
flip crossing/.list={2,3}
]
	\strand (8,2) .. (5,-2)[add arrow];   
    \strand (5,2).. (8,-2)[add arrow];
    \strand (6.3,-2.4).. (7,-1.5).. (8,0).. (7,1.5)..(6.3,2.4)[add arrow];
  \end{knot}  
    \node[left] at (5,2) {\tiny $a$};
    \node[left] at (8,2) {\tiny $c$};
    \node[left] at (6.3,-2) {\tiny $b$};

    \node[left] at (7,.8) {\tiny$c*b$};
    \node[left] at (5.5,-1.4) {\tiny $R_1(a,c*b)$};
    \node[right] at (8,-2) {\tiny $R_2(a,c*b)\bar{*}b$};
    \node[circle,draw=black, fill=black, inner sep=0pt,minimum size=6pt] (a) at (6.5,0) {};
\end{tikzpicture}
\vspace{.2in}
		\caption{The Reidemeister move $\Omega 4a$ with colors.}
		\label{The generalized Reidemeister move RIV}
\end{figure}
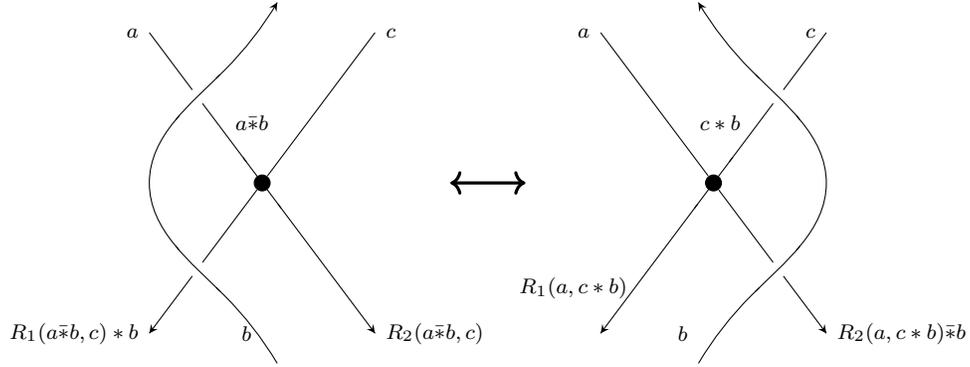

\begin{figure}[h]
\begin{tikzpicture}[use Hobby shortcut,, scale=1, add arrow/.style={postaction={decorate}, decoration={
  markings,
  mark=at position 1 with {\arrow[scale=1,>=stealth]{>}}}}]
\begin{knot}[
consider self intersections=true,
clip width=4,
 ignore endpoint intersections=false,
]
 	\strand (-1,2) .. (2,-2)[add arrow];
	\strand (2,2) ..  (-1,-2)[add arrow];
	\strand  (.7,-2.4)..(0,-1.5).. (-1,0)..(0,1.5)..(.7,2.4)[add arrow];
\end{knot}

	\node[left] at (-1,2) {\tiny $a$};
	\node[left] at (-1,0){\tiny$b \bar{*} R_1(a,c)$};
    \node[left] at (-1,-2) {\tiny $R_1(a,c)$};
    \node[above] at (.7,2.4) {\tiny$(b \bar{*} R_1(a,c))*a$};
    \node[right] at (2,2) {\tiny $c$};
    \node[left] at (.5,-2) {\tiny$b$};
    \node[right] at (2,-2) {\tiny $R_2(a,c)$};
    \node[circle,draw=black, fill=black, inner sep=0pt,minimum size=6pt] (a) at (.5,0) {};
    \draw [very thick, <->] (3,0) -- (4,0);

\begin{knot}[
consider self intersections=true,
clip width=4,
 ignore endpoint intersections=false,
]
	\strand (8,2) .. (5,-2)[add arrow];   
    \strand (5,2).. (8,-2)[add arrow];
    \strand (6.3,-2.4).. (7,-1.5).. (8,0).. (7,1.5)..(6.3,2.4)[add arrow];
  \end{knot}

    \node[left] at (5,2) {\tiny $a$};
    \node[left] at (8,2) {\tiny $c$};
    \node[left] at (6.3,-2) {\tiny $b$};

    \node[right] at (8,0) {\tiny$b*R_2(a,c)$};
    \node[left] at (5.5,-1.4) {\tiny $R_1(a,c)$};
    \node[right] at (8,-2) {\tiny $R_2(a,c)$};
    \node[above] at (6.3,2.4) {\tiny $(b*R_2(a,c))\bar{*}c$};
    \node[circle,draw=black, fill=black, inner sep=0pt,minimum size=6pt] (a) at (6.5,0) {};
\end{tikzpicture}
\vspace{.2in}
		\caption{The Reidemeister move $\Omega 4e$ with colors.}
		\label{The generalized Reidemeister move RIVb}
\end{figure}

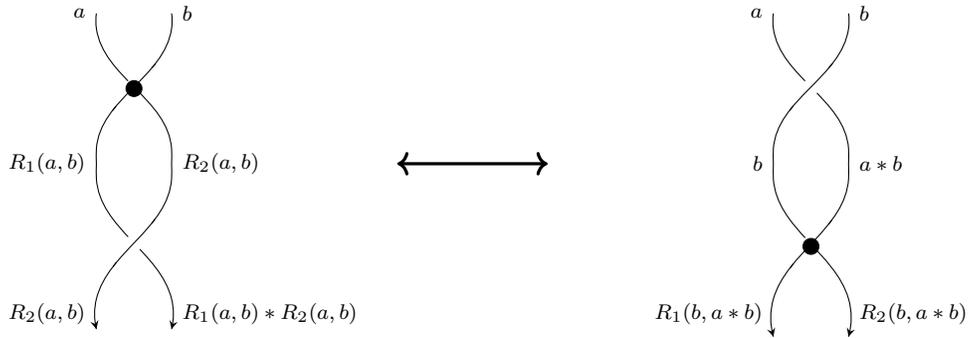
\begin{figure}[h]
\begin{tikzpicture}[use Hobby shortcut,, scale=1, add arrow/.style={postaction={decorate}, decoration={
  markings,
  mark=at position 1 with {\arrow[scale=1,>=stealth]{>}}}}]
\begin{knot}[
consider self intersections=true,
clip width=4,
 ignore endpoint intersections=false,
flip crossing/.list={2}
]
	\strand (-6,2) .. (-6,1.8).. (-7,.2) .. (-7,0)..(-7,-.2) .. (-6,-1.8).. (-6,-2.2)[add arrow];
		
 	\strand (-7,2).. (-7,1.8).. (-6,.2).. (-6,0)..(-6,-.2).. (-7,-1.8)..(-7,-2.2)[add arrow] ;
\end{knot}
	\node[left] at (-7,2) {\tiny $a$}; 
    \node[left] at (-7,0) {\tiny $R_1(a,b)$};
    \node[left] at (-7,-2) {\tiny $R_2(a,b)$};
	\node[right] at (-6,2) {\tiny $b$};
	\node[right] at (-6,0) {\tiny $R_2(a,b)$};
	\node[right] at (-6,-2) {\tiny $R_1(a,b)*R_2(a,b)$};
\node[circle,draw=black, fill=black, inner sep=0pt,minimum size=6pt] (a) at (-6.5,1) {};
	
    \draw [very thick, <->] (-3,0) -- (-1,0);

\begin{knot}[
consider self intersections=true,
clip width=4,
 ignore endpoint intersections=false,
flip crossing/.list={1}
]
	\strand (2,2) .. (2,1.8).. (3,.2) .. (3,0)..(3,-.2) .. (2,-1.8).. (2,-2.3)[add arrow];
	\strand (3,2).. (3,1.8).. (2,.2).. (2,0)..(2,-.2).. (3,-1.8)..(3,-2.3)[add arrow];
	\end{knot}
	
	\node[left] at (2,2) {\tiny $a$}; 
	\node[left] at (2,0) {\tiny $b$};
	\node[left] at (2,-2) {\tiny $R_1(b,a*b)$};
	\node[right] at (3,2) {\tiny $b$};
	\node[right] at (3,0) {\tiny $a * b$};
	\node[right] at (3,-2) {\tiny $R_2(b,a*b)$};
	\node[circle,draw=black, fill=black, inner sep=0pt,minimum size=6pt] (a) at (2.5,-1.1) {};
\end{tikzpicture}
\vspace{.2in}
		\caption{The Reidemeister move $\Omega 5a$ with colors.}
		\label{The generalized Reidemeister move RV}
\end{figure}

\begin{definition}\label{oriented SingQdle}
	Let $(X, *)$ be a quandle.  Let $R_1$ and $R_2$ be two maps from $X \times X$ to $X$.  The triple $(X, *, R_1, R_2)$ is called an {\it oriented singquandle} if the following axioms are satisfied for all $a,b,c \in X$:
	\begin{eqnarray}
		R_1(a\bar{*}b,c)*b&=&R_1(a,c*b)  \label{eq1}\\
		R_2(a\bar{*}b, c) & =&  R_2(a,c*b)\bar{*}b \label{eq2}\\
	      (b\bar{*}R_1(a,c))*a   &=& (b*R_2(a,c))\bar{*}c  \label{eq3}\\
R_2(a,b)&=&R_1(b,a*b)   \label{eq4}\\
R_1(a,b)*R_2(a,b)&=&R_2(b,a*b).   \label{eq5}	
\end{eqnarray}	
\end{definition}
\begin{remark}
We will only consider oriented singquandles in this paper. Therefore, we will simply refer to oriented singquandle as singquandles. For a description of unoriented singquandles, see \cite{CEHN}.
\end{remark}
In \cite{CCEH}, the following family of singquandles was introduced. 
\begin{example}\label{singfam}
 Let $n$ be a positive integer, let $a$ be an invertible element in $\mathbb{Z}_n$ and let $b,c \in \mathbb{Z}_n$, then the binary operations $x*y = ax+(1-a)y$, $ R_1(x,y) = bx + cy$ and $R_2(x,y)= acx + [b+ c(1 - a)]y $ make the quadruple $(\mathbb{Z}_n,*, R_1,R_2)$ into an oriented singquandle.
\end{example}
The singquandles used in this paper are obtained from Example~\ref{singfam} with specific values of $a,b,$ and $c$.
\begin{definition}
Let $(X, *, R_1, R_2)$ be a singquandle.  A subset $M \subset X$ is called a subsingquandle if $(M, *, R_1, R_2)$ is itself a singquandle.  In particular, $M$ is closed under the operations $*, R_1$ and $R_2$.  
\end{definition}
\noindent We can define the notion of a homomorphism and isomorphism of oriented singquandles.

\begin{definition}\label{singHom}
A map $f: X \rightarrow Y$ is called a \emph{homomorphism} of oriented singquandles $(X, *, R_1, R_2)$ and $(Y, \triangleright, R'_1, R'_2)$ if the following conditions are satisfied for all $x,y \in X$
\begin{eqnarray}
f(x*y)&=&f(x) \triangleright f(y)\label{3.6}\\
f(R_1(x,y))&=&R'_1(f(x),f(y))\label{3.7}\\
f(R_2(x,y))&=&R'_2(f(x),f(y)).\label{3.8}
\end{eqnarray}
An oriented singquandle \emph{isomorphism} is a bijective oriented singquandle homomorphism. We say two oriented singquandles are \emph{isomorphic} if there exists an oriented singquandle isomorphism between them.
\end{definition}

The authors of this paper introduced the idea of a fundamental singquandle associated to a singular link $L$, denoted by $\mathcal{SQ}(L)$, in \cite{CCE}. Therefore, for any oriented singular link $L$ and an oriented singquandle $(S,*,R_1', R_2')$, the set of singquandle homomorphism from $(\mathcal{SQ}(L),\triangleright ,R_1,R_2)$ to $(S, *, R_1', R_2')$ is defined by:
\begin{equation*}
\begin{split}
\textup{Hom}(\mathcal{SQ}(L),S) = \lbrace f \, : \,  &\mathcal{SQ}(L) \rightarrow S \, \vert \, f( x\triangleright y) = f(x) * f(y),\\ &f(R_1(x,y))= R_1'(f(x),f(y)),f(R_2(x,y))= R_2'(f(x),f(y)) \rbrace.
\end{split}
\end{equation*}
The set defined above was shown to be an invariant of oriented singular links in \cite{BEHY}. Furthermore, this set can be used to define computable invariants of oriented singular links. For example, by taking the cardinality of $\textup{Hom}(\mathcal{SQ}(L),S)$, we obtain the following invariant of oriented singular links.

\begin{definition}
Let $L$ be an oriented singular link and $(S,*,R_1,R_2)$ be an oriented singquandles. The \emph{singquandle counting invariant} is 
\[ \#\textup{Col}_S (L)=\vert \textup{Hom}(\mathcal{SQ}(L),S) \vert.  \]
\end{definition}

\begin{remark}\label{imagesubsing}
We also note that the image, $\textup{Im}(f)$, for each $f \in\textup{Hom}(\mathcal{SQ}(L),S)$ is a subsingquandle of $S$ as shown in \cite{CCE}.
\end{remark}

\section{Review of the Singquandle Polynomial}\label{singpoly}
The quandle polynomial was introduced in \cite{N} and generalized in \cite{N2}.  In \cite{CCE}, the authors of this paper defined the singquandle polynomial, the subsingquandle polynomial and a polynomial invariant of singular links. In this section, we will give an overview of the construction of the singquandle polynomial, the subsingquandle polynomial, and the polynomial invariant of singular links. We will follow the construction and notation introduced in \cite{CCE}.

\begin{definition}
Let $(X,*,R_1,R_2)$ be a finite singquandle. For every $x \in X$, define
\[ C^1(x) = \lbrace y \in X \, \vert \, y * x = y \rbrace \quad \text{and} \quad R^1(x) = \lbrace y \in X \, \vert \, x * y = x \rbrace, \]
\[ C^2(x) = \lbrace y \in X \, \vert \, R_1(y , x) = y \rbrace \quad \text{and} \quad R^2(x) = \lbrace y \in X \, \vert \, R_1(x , y) = x \rbrace, \]
\[ C^3(x) = \lbrace y \in X \, \vert \, R_2(y , x) = y \rbrace \quad \text{and} \quad R^3(x) = \lbrace y \in X \, \vert \, R_2(x , y) = x \rbrace. \]\\
Let $c^i(x) = \vert C^i(x)\vert$ and $r^i(x) = \vert R^i(x)\vert$ for $i=1,2,3$. Then the \emph{singquandle polynomial of X} is 

\[ sqp(X) = \sum_{x\in X} s_1^{r^1(x)}t_1^{c^1(x)}s_2^{r^2(x)}t_2^{c^2(x)}s_3^{r^3(x)}t_3^{c^3(x)}.  \]
\end{definition}
We note that the value $r^i(x)$ is the number of elements in $X$ that act trivially on $x$, while $c^i(x)$ is the number of elements of $X$ on which $x$ acts trivially via $*, R_1$ and $R_2.$ Furthermore, if $Y \subset X$ is a subsingquandle we can define the following singquandle polynomial for $Y$ as a subsignquandle of $X$

\begin{definition}
Let $(X,*, R_1,R_2)$ be a finite singquandle and $S \subset  X$ a subsingquandle. Then the \emph{subsingquandle polynomial} is 
\[ Ssqp(S \subset X ) = \sum_{x \in S} s_1^{r^1(x)}t_1^{c^1(x)}s_2^{r^2(x)}t_2^{c^2(x)}s_3^{r^3(x)}t_3^{c^3(x)}. \]
\end{definition} 
The subsingquandle polynomials can be thought of as the contributions to the singquandle polynomial coming from the subsingquandles we are considering. Using the subsingquandles polynomial we can now define the following polynomial invariant of singular links.

\begin{definition}
Let $L$ be a singular link, $(X,*,R_1,R_2)$ a finite singquandle. Then the multiset
\[ \Phi_{Ssqp}(L,X) = \lbrace Ssqp(Im(f) \subset X) \, \vert \, f \in \text{Hom}(\mathcal{SQ}(L), X\rbrace \]
is the \emph{subsingquandle polynomial invariant of $L$} with respect to $X$. We can also represent this invariant in the following polynomial-style form by converting the multiset elements to exponents of a formal variable $u$ and converting their multiplicities to coefficients:
\[ \phi_{Ssqp}(L,X) = \sum_{f \in \textup{Hom}(\mathcal{SQ}(L),X)} u^{Ssqp(Im(f) \subset X)}.\]
\end{definition}

\begin{example}
Consider the \emph{2-bouquet graphs of type $L$} listed as $1^l_1$ in \cite{Oyamaguchi}. Let $(S,*,R_1,R_2)$ be the singquandle with $S=\mathbb{Z}_4$ and operations $x*y = 3x-2y = x\bar{*}y$, $R_1(x,y)=2 x + 3 y$, and $R_2(x,y)=x$. 
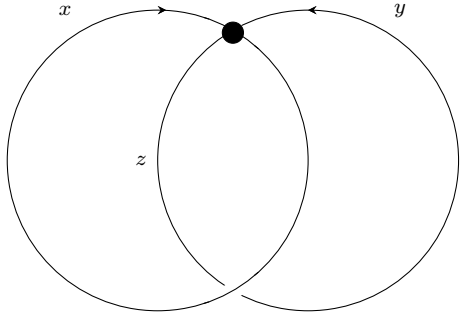
\begin{figure}[h]
\begin{tikzpicture}[use Hobby shortcut]
\begin{knot}[
 clip width=4
]

\strand[decoration={markings,mark=at position .25 with
    {\arrow[scale=1,>=stealth]{<}}},postaction={decorate}](-1,0) circle[radius=2cm];
\strand[decoration={markings,mark=at position .25 with
    {\arrow[scale=1,>=stealth]{>}}},postaction={decorate}] (1,0) circle[radius=2cm];

\end{knot}

\node[circle,draw=black, fill=black, inner sep=0pt,minimum size=8pt] (a) at (0,1.7) {};
\node[left] at (-1,0) {\tiny $z$};
\node[left] at (-2,2) {\tiny $x$};
\node[right] at (2,2) {\tiny $y$};
\end{tikzpicture}
\vspace{.2in}
		\caption{Diagram of $1_1^l$.}
		\label{11l}
\end{figure}
We can identify each coloring of $1_1^l$ by $S$ with the triple $(f(x),f(y),f(z))$. Using the fact that $z = R_1(x,y), x = R_2(x,y)$ and $z * x =y,$  by a straightforward computation we  obtain the following colorings: 
\begin{equation*}
\begin{split}
\textup{Hom}(\mathcal{SQ}(1_1^l),S)=\{(1, 1, 1), (1, 2, 0), (1, 3, 3), (1, 0, 2), (2, 1, 3), (2, 2, 2), (2, 3, 1), (2, 0, 0),\\ (3, 1, 1), (3, 2, 0), (3, 3, 3), (3, 0, 2), (0, 1, 3), (0, 2, 2), (0, 3, 1), (0, 0, 0)\}.
\end{split}
\end{equation*}

Therefore, $\#\textup{Col}_S (1_1^l)=16$. We compute $r^i$ and $c^i$ for $i=1,2,3$:

\[
\begin{tabular}{ r|  c  c  }
$x$ & $r^1(x)$ & $c^1(x)$\\
  \hline			
1&  2 & 2 \\
2&  2 & 2  \\
3&  2 & 2 \\
0&  2 & 2  \\
\end{tabular}
\qquad
\begin{tabular}{ r|  c  c  }
$x$ & $r^2(x)$ & $c^2(x)$\\
  \hline			
1&  1 & 1 \\
2&  1 & 1  \\
3&  1 & 1 \\
0&  1 & 1  \\
\end{tabular}
\qquad
\begin{tabular}{ r|  c  c  }
$x$ & $r^3(x)$ & $c^3(x)$\\
  \hline			
1&  4& 4\\
2&  4 & 4  \\ 
3&  4 & 4 \\
0&  4 & 4  \\
\end{tabular}.
\]
In order to compute the subsingquandle polynomial invariant of $1_1^l$ we consider the corresponding subsingquandle for each coloring. Therefore, we obtain  $$\phi_{Ssqp}(1_1^l,S) = 4 u^{s_1^2 t_1^2 s_2  t_2 s_3^4 t_3^4}+4 u^{2 s_1^2 t_1^2 s_2 t_2 s_3^4 t_3^4}+8
   u^{4 s_1^2 t_1^2 s_2 t_2 s_3^4 t_3^4}.$$
\end{example}
\section{Singquandle Shadows}\label{SS}

In this section, we define \emph{singquandle shadows} which can be used to generalize the \emph{shadow colorings} of knot diagrams by quandles previously defined in \cites{CN}.

\begin{definition}
Let $(S,*,R_1,R_2)$ be a singquandle. An \emph{S-set} is a set  $X$ and a map $\cdot : X \times S \rightarrow X$ satisfying the following conditions: 
\begin{enumerate}[label=(\roman*)]
    \item For all $s \in S$, $\cdot s : X \rightarrow X$ mapping $x$ to $x \cdot s$ is a bijection.
    \item For all $s_1, s_2 \in S$ and $x \in X$,
 \begin{eqnarray}
(x \cdot s_1)\cdot s_2 &=& (x \cdot s_2)\cdot ( s_1 * s_2)\\
(x \cdot s_1) \cdot s_2 &=& (x \cdot R_1(s_1,s_2)) \cdot R_2(s_1,s_2).
\end{eqnarray}
\end{enumerate}

\end{definition}
The meaning of these two equations will become clear from Figure~\ref{shadowX}.
\begin{definition}
A \emph{singquandle shadow} or $S$-shadow is the pair of an oriented sinquandle $(S,*,R_1,R_2)$ and a $S$-set $(X,\cdot)$, denoted by $(S,X,*,R_1,R_2,\cdot)$ or simply by $(S,X)$. Let $S^{\prime}$ be a subsingquandle of $S$. A subset $Y$ of $X$ closed under the action of $S^{\prime}$ is an \emph{subshadow} of $(S,X)$, which we will denote by $(S^\prime, Y) \subset (S,X)$.
\end{definition}

The following definition will allow us to present a shadow operation in an alternate form that will be useful in later sections.

\begin{definition}\label{shadowmatrix}
When $(X,S)$ is a singquandle shadow with $X$ and $S$ finite, the \emph{shadow matrix} of the singquandle shadow $(X = \lbrace x_1, \dots, x_m \rbrace$, $S = \lbrace s_1,\dots, s_n \rbrace)$ is the $m \times n$ matrix whose $(i,j)$ entry is $k$ where $x_k = x_i \cdot s_j$.
\end{definition}
 
Let $(S,*,R_1,R_2)$ and $(S',\triangleright, R'_1,R'_2)$ be singquandles. Furthermore, let $(X, \cdot)$ be an $S$-set and $(X', \bullet)$ be an $S'$-set.  A map $f: S \rightarrow S'$ makes $X'$ inherit a natural action of $S$ via the map $f$ by $x' \cdot s :=x' \bullet f(s)$.

\begin{definition}
A \emph{homomorphism of sinquandle shadows} between $(S,X,*,R_1,R_2,\cdot)$ and \\ $(S',Y,\triangleright,R'_1,R'_2,\bullet)$ is a pair of maps $\phi:(X, \cdot) \rightarrow (Y,\bullet)$ and $f:(S,*,R_1,R_2) \rightarrow (S',\triangleright,R'_1,R'_2)$, such that $f$ is a singquandle homomorphism, that is the identities (\ref{3.6}), (\ref{3.7}) and (\ref{3.8}) are satisfied and for all $x \in X$ and $s \in S$, we have
\begin{eqnarray}\label{action}
\phi(x \cdot s)=\phi(x) \bullet f(s).
\end{eqnarray}
Furthermore, if $\phi$ and $f$ are bijections then we have a \emph{singquandle shadow isomorphism}.
\end{definition}

From this definition it is straightforward to obtain the following lemma.

\begin{lemma}
$(Im(f),Im(\phi),\triangleright,R'_1,R'_2,\bullet)$ is a \emph{subshadow} of $(S',Y,\triangleright,R'_1,R'_2,\bullet)$.
\end{lemma}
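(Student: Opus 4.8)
The plan is to unwind the definition of \emph{subshadow}: I must exhibit a subsingquandle of $S'$ together with a subset of $Y$ that is closed under the action of that subsingquandle, and then check that the two candidate objects $Im(f)$ and $Im(\phi)$ satisfy exactly these requirements. Accordingly, the proof splits into two essentially independent verifications, one for the singquandle part and one for the $S$-set part, each driven by one of the two defining conditions of a shadow homomorphism.

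First I would verify that $Im(f)$ is a subsingquandle of $(S',\triangleright,R'_1,R'_2)$, i.e.\ that it is closed under $\triangleright$, $R'_1$ and $R'_2$. Given $f(s_1),f(s_2)\in Im(f)$ with $s_1,s_2\in S$, the three homomorphism identities (\ref{3.6}), (\ref{3.7}) and (\ref{3.8}) let me rewrite $f(s_1)\triangleright f(s_2)=f(s_1*s_2)$, $R'_1(f(s_1),f(s_2))=f(R_1(s_1,s_2))$ and $R'_2(f(s_1),f(s_2))=f(R_2(s_1,s_2))$, and each right-hand side lies in $Im(f)$ because $s_1*s_2$, $R_1(s_1,s_2)$ and $R_2(s_1,s_2)$ belong to $S$. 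Hence $Im(f)$ is closed under all three operations and is a subsingquandle of $S'$; this is precisely the argument already recorded in Remark~\ref{imagesubsing}, so no new idea is required.

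Second I would verify that $Im(\phi)\subseteq Y$ is closed under the action $\bullet$ of $Im(f)$. Let $\phi(x)\in Im(\phi)$ with $x\in X$ and let $f(s)\in Im(f)$ with $s\in S$. The compatibility identity (\ref{action}) gives $\phi(x)\bullet f(s)=\phi(x\cdot s)$, and since $x\cdot s\in X$ we conclude $\phi(x)\bullet f(s)\in Im(\phi)$. Thus $Im(\phi)$ is stable under the action of every element of $Im(f)$. Combining the two verifications, $(Im(f),Im(\phi))$ meets both clauses of the subshadow definition, which yields $(Im(f),Im(\phi),\triangleright,R'_1,R'_2,\bullet)\subset(S',Y,\triangleright,R'_1,R'_2,\bullet)$.

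I do not anticipate a genuine obstacle here, since the statement follows directly once the two closure conditions are checked. The only point deserving a word of care is whether the restricted structure $(Im(\phi),\bullet)$ is itself a bona fide $Im(f)$-set: the two $S$-set equations transfer to $Im(\phi)$ for free because they already hold on all of $X$, while the bijectivity axiom (i) survives restriction because each map $\cdot\,s$ is injective and, in the finite setting adopted throughout the paper, an injective self-map of the finite set $Im(\phi)$ is automatically a bijection. I would mention this only briefly, as the definition of subshadow as stated demands closure rather than a full re-verification of the $S$-set axioms.
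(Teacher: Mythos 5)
Your proof is correct and is exactly the straightforward verification the paper has in mind: it states the lemma with no written proof, remarking only that it follows directly from the definition of a shadow homomorphism, and your two closure checks (via identities (\ref{3.6})--(\ref{3.8}) for $Im(f)$ and (\ref{action}) for $Im(\phi)$) are precisely that argument. Your closing remark on restricting the $S$-set axioms is a reasonable extra precaution but not required by the paper's definition of subshadow, which asks only for closure.
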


\begin{example}\label{shadow4elem}
Let $(S,*,R_1,R_2)$ be an oriented singquandle with $S=\mathbb{Z}_8=\{ 1,2,3,4,5,6,7,0 \}$, $x*y = 5x-4y = x\bar{*}y$, $R_1(x,y) =3x+4y $, and $R_2(x,y)= 4x+3y$. Then the four element set $X= \mathbb{Z}_4=\{ 1,2,3,0 \}$ with map $\cdot s : \mathbb{Z}_4 \rightarrow \mathbb{Z}_4$ for each $s \in S$ defined by $x\cdot s = x + 2 s + s^2$ is a singquandle shadow. Note that $\cdot$ has the following operation table
\[ \begin{array}{r|cccccccc}
\cdot & 1 & 2 & 3 & 4 & 5 & 6 & 7 & 0\\
\hline
 1& 0 & 1 & 0 & 1 & 0 & 1 & 0 & 1 \\
 2& 1 & 2 & 1 & 2 & 1 & 2 & 1 & 2 \\
 3& 2 & 3 & 2 & 3 & 2 & 3 & 2 & 3 \\
 0& 3 & 0 & 3 & 0 & 3 & 0 & 3 & 0 \\
\end{array}. \]
Furthermore, by Definition~\ref{shadowmatrix} the shadow operation $\cdot$ can be presented by the shadow matrix,
\[
\left[
\begin{array}{cccccccc}
 0 & 1 & 0 & 1 & 0 & 1 & 0 & 1 \\
 1 & 2 & 1 & 2 & 1 & 2 & 1 & 2 \\
 2 & 3 & 2 & 3 & 2 & 3 & 2 & 3 \\
 3 & 0 & 3 & 0 & 3 & 0 & 3 & 0 \\
\end{array}
\right].
\]
\end{example}

\begin{example}\label{shadowsing}
Let $(S,*,R_1,R_2)$ be any oriented singquandle and let $X = S$. Then $X$ is a singquandle shadow under the shadow operation $x \cdot s = x * s$ for all $x,y \in S$, since we have 
\begin{eqnarray}
(x \cdot s_1) \cdot s_2 &=& (x * s_1)*s_2 \nonumber\\
&=& (x * s_2) * (s_1 * s_2) \nonumber\\
&=& (x\cdot s_2) \cdot (s_1 \cdot s_2), \label{Eq1}
\end{eqnarray}
and
\begin{eqnarray}
(x \cdot s_1) \cdot s_2 &=& (x * s_1)*s_2 \nonumber\\
&=& (x * R_1 (s_1,s_2)) * R_2(s_1,s_2) \nonumber\\
&=& (x \cdot R_1 (s_1,s_2)) \cdot R_2(s_1,s_2).\label{Eq2}
\end{eqnarray}

\end{example}

\begin{remark} Equation~(\ref{Eq1}) is satisfied by the self-distributitive property of $*$. On the other hand, equation~(\ref{Eq2}) is satisfied by applying a combination of singquandle properties. Consider equation~(\ref{eq3}) in the definition of a singquandle, let $b = c$. Therefore, we obtain 
\begin{eqnarray*}
(c \bar{*} R_1(a,c))* a &=& (c * R_2(a,c)) \bar{*} c,
\end{eqnarray*}
which can be written as
\begin{eqnarray*}
((c \bar{*} R_1(a,c))* a)*c &=& c * R_2(a,c).
\end{eqnarray*}
Now, we let $w = c \bar{*} R_1(a,c) \iff w * R_1(a,c) = c$. Next, we make the appropriate substitution to obtain 
\[ (w *a) * c  = (w * R_1(a,c)) * R_2(a,c).\]
Lastly, let $w=x$, $a = s_1$, and $b = s_2$ to obtain 
\[ (x * s_1)*s_2 = (x * R_1 (s_1,s_2)) * R_2(s_1,s_2). \]
\end{remark}


Let $D$ be a diagram of an oriented singular link $L$. We will denoted the set of arcs of $D$ by $\mathcal{A}(D)$ and the connected regions of $\mathbb{R}^2\setminus D$ by $\mathcal{R}(D)$. Using the notion of a singquandle homomorphism given in Definition~\ref{singHom}, we have the following notion of colorings by singquandles. 

\begin{definition}
Let $(S,*, R_1, R_2)$ be an oriented singquandle. An \emph{$S$-coloring} of $D$ is a map $f: \mathcal{A}(D) \rightarrow S$ such that at a crossing with  $u_1, u_2, o_1 \in \mathcal{A}(D)$ and at a singular crossing with $a_1,a_2,a_3,a_4 \in \mathcal{A}(D)$ the following conditions are satisfied,
\begin{eqnarray}
f(u_2) = f(u_1) * f(o_1), \\
f(a_3) = R_1(f(a_1),f(a_2)), \\
f(a_4) = R_2(f(a_1),f(a_2)).
\end{eqnarray}  
The conditions above are illustrated in Figure~\ref{shadowrule}.
\end{definition}

\begin{definition}
Let $(S,X,*, R_1, R_2, \cdot)$ be a shadow singquandle. An \emph{$(S,X)$-coloring} of $D$ is a map $f \times \phi: \mathcal{A}(D) \times \mathcal{R}(D) \rightarrow S \times X$ satisfying the following conditions,
\begin{itemize}
    \item $f$ is an $S$-coloring of $D$.
    \item $\phi(\mathcal{R}(D)) \subset X$.
    \item For $a \in \mathcal{A}(D)$ and $x_1, x_2 \in \mathcal{R}(D)$ the following
    \begin{equation}  
    \phi(x_1) \cdot f(a) = \phi(x_2).
    \end{equation}
\end{itemize}
The condition above is illustrated in Figure~\ref{shadowrule}.
\end{definition} 

When there is no confusion we will refer to an $(S,X)$-coloring by  a \emph{shadow coloring} of $D$.
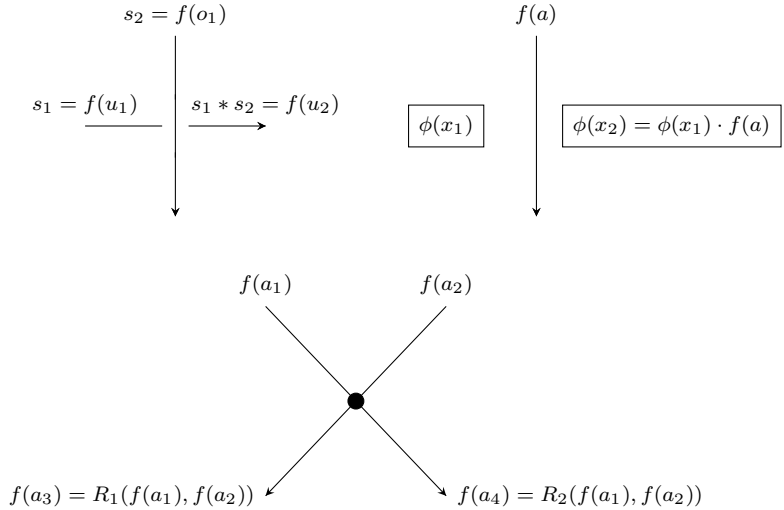
\begin{figure}[h]
\begin{tikzpicture}[use Hobby shortcut,scale=1.2]
\begin{knot}[
consider self intersections=true,
clip width=5,
  ignore endpoint intersections=false,
  flip crossing/.list={3,4,5,6,11,13}
]
\strand[decoration={markings,mark=at position 1 with
    {\arrow[scale=1,>=stealth]{>}}},postaction={decorate}] (0,1) ..(0,-1); 
\strand[decoration={markings,mark=at position 1 with
    {\arrow[scale=1,>=stealth]{>}}},postaction={decorate}] (-1,0) ..(1,0);
    
\strand[decoration={markings,mark=at position 1 with
    {\arrow[scale=1,>=stealth]{>}}},postaction={decorate}] (4,1) ..(4,-1);

\draw (1,-2)..(3,-4.1)[decoration={markings,mark=at position 1 with
    {\arrow[scale=1,>=stealth]{>}}},postaction={decorate}];
\draw[decoration={markings,mark=at position 1 with
    {\arrow[scale=1,>=stealth]{>}}},postaction={decorate}] (3,-2)..(1,-4.1);
   
\end{knot}

\node[above] at (-1,0) {\tiny $s_1=f(u_1)$};
\node[above] at (0,1) {\tiny $s_2=f(o_1)$};
\node[above] at (1,0) {\tiny $s_1 * s_2 =f(u_2)$};

\draw (3,0) node [draw] {\tiny $\phi(x_1)$};
\draw (5.5,0) node [draw] {\tiny $\phi(x_2) = \phi(x_1)\cdot f(a)$};
\node[above] at (4,1) {\tiny $f(a)$};

\node[above] at (1,-2) {\tiny $f(a_1)$};
\node[above] at (3,-2) {\tiny $f(a_2)$};
\node[right] at (3,-4.1) {\tiny $f(a_4)=R_2 (f(a_1),f(a_2))$};
\node[left] at (1,-4.1) {\tiny $f(a_3)=R_1(f(a_1),f(a_2))$};
\node[circle,draw=black, fill=black, inner sep=0pt,minimum size=6pt] (a) at (2,-3.05) {};
\end{tikzpicture}
	\caption{Arcs and regions of diagram $D$.}
		\label{shadowrule}
\end{figure}

%
%
%
%
%
We will denote a region coloring by a box around the shadow element and we will denote an arc coloring by an element of a singquandle without a box. Note that the conditions required for the set $X$ to be an $S$-set for some oriented sinquandle, are the conditions needed to guarantee that shadow colorings are well defined at crossings, see Figure~\ref{shadowX}.

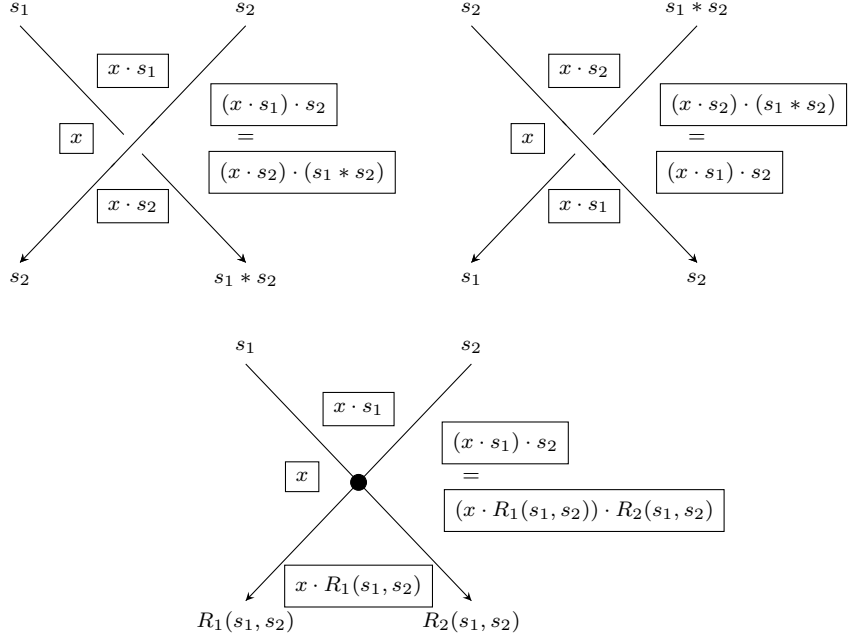
\begin{figure}[h]
\begin{tikzpicture}[use Hobby shortcut,scale=1.5]
\begin{knot}[
consider self intersections=true,
clip width=5,
  ignore endpoint intersections=false,
  flip crossing/.list={3,4,5,6,11,13}
]
\strand[decoration={markings,mark=at position 1 with
    {\arrow[scale=1,>=stealth]{>}}},postaction={decorate}] (1,1) ..(-1,-1.1); 
\strand[decoration={markings,mark=at position 1 with
    {\arrow[scale=1,>=stealth]{>}}},postaction={decorate}] (-1,1) ..(1,-1.1);
    
\strand[decoration={markings,mark=at position 1 with
    {\arrow[scale=1,>=stealth]{>}}},postaction={decorate}] (3,1) ..(5,-1.1);
\strand[decoration={markings,mark=at position 1 with
    {\arrow[scale=1,>=stealth]{>}}},postaction={decorate}] (5,1) ..(3,-1.1);

\draw[decoration={markings,mark=at position 1 with
    {\arrow[scale=1,>=stealth]{>}}},postaction={decorate}] (1,-2) ..(3,-4.1);
\draw[decoration={markings,mark=at position 1 with
    {\arrow[scale=1,>=stealth]{>}}},postaction={decorate}] (3,-2) ..(1,-4.1);     
\end{knot}

\node[above] at (-1,1) {\tiny $s_1$};
\node[above] at (1,1) {\tiny $s_2$};

\node[below] at (-1,-1.1) {\tiny $s_2$};
\node[below] at (1,-1.1) {\tiny $s_1 *s_2$};

\draw (-.5,0) node [draw] {\tiny $x$};
\draw (0,.6) node [draw] {\tiny $x \cdot s_1$};
\draw (1.25,.3) node [draw] {\tiny $(x \cdot s_1) \cdot s_2$};
\draw  (0,-.6) node [draw] {\tiny $x \cdot s_2$};
\draw (1.5,-.3) node [draw] {\tiny $(x \cdot s_2) \cdot (s_1 * s_2)$};
\node at (1,0) {\tiny $=$};
\node[below] at (3,-1.1) {\tiny $s_1$};
\node[below] at (5,-1.1) {\tiny $s_2$};

\node[above] at (3,1) {\tiny $s_2$};
\node[above] at (5,1) {\tiny $s_1 * s_2$};

\draw (3.5,0) node [draw] {\tiny $x$};
\draw (4,.6) node [draw] {\tiny $x \cdot s_2$};
\draw (5.5,.3) node [draw] {\tiny $(x \cdot s_2) \cdot (s_1 * s_2)$};
\draw (4,-.6) node [draw] {\tiny $x \cdot s_1$};
\draw (5.2,-.3) node [draw] {\tiny $(x \cdot s_1) \cdot s_2$};
\node at (5,0) {\tiny $=$};

\node[above] at (1,-2) {\tiny $s_1$};
\node[above] at (3,-2) {\tiny $s_2$};

\node[below] at (1,-4.1) {\tiny $R_1(s_1,s_2)$};
\node[below] at (3,-4.1) {\tiny $R_2(s_1,s_2)$};

\draw (1.5,-3) node [draw] {\tiny $x$};
\draw (2,-2.4) node [draw] {\tiny $x \cdot s_1$};
\draw (3.3,-2.7) node [draw] {\tiny $(x \cdot s_1) \cdot s_2$};
\draw (2,-3.97) node [draw] {\tiny $x \cdot R_1(s_1,s_2)$};
\draw (4,-3.3) node [draw] {\tiny $(x \cdot R_1(s_1,s_2)) \cdot R_2(s_1,s_2) $};
\node at (3,-3) {\tiny $=$};
\node[circle,draw=black, fill=black, inner sep=0pt,minimum size=6pt] (a) at (2,-3.05) {};
\end{tikzpicture}
	\caption{Shadow coloring at positive, negative, and singular crossings.}
		\label{shadowX}
\end{figure}

\begin{proposition}
Let $L$ be a singular link diagram and $(S,X,*,R_1,R_2,\cdot)$ be a singquandle shadow. Then for each singquandle coloring of $L$ by $S$ and each element of $X$ there is exactly one shadow coloring of $L$.
\end{proposition}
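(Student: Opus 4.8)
The plan is to fix the underlying $S$-coloring $f$ and construct the region coloring $\phi$ by propagation, showing that a single choice of region value both forces and permits a consistent assignment on every region of $D$. Concretely, choose the unbounded region $R_0 \in \mathcal{R}(D)$ as a base point and let $x_0 \in X$ be the prescribed element. Given any region $R$, pick a path $\gamma$ from $R_0$ to $R$ in $\mathbb{R}^2$ meeting $D$ transversally in finitely many interior points of arcs and missing all crossings. Each time $\gamma$ crosses an arc $a$, the region rule $\phi(x_2)=\phi(x_1)\cdot f(a)$ of Figure~\ref{shadowrule} dictates the change of region value: crossing $a$ in the positive direction applies $\,\cdot\, f(a)$, and crossing it oppositely applies the inverse map $(\,\cdot\, f(a))^{-1}$, which exists precisely because condition (i) of an $S$-set makes $\,\cdot\, s$ a bijection for every $s$. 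I would then define $\phi(R)$ to be the result of applying this sequence of bijections to $x_0$ along $\gamma$.

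First I would dispatch uniqueness, which is immediate: any shadow coloring extending $f$ must satisfy the region rule across every arc, so once $\phi(R_0)=x_0$ is fixed, the value on each region is forced by following any path back to $R_0$. Hence there is at most one shadow coloring with the given data, and it must coincide with the $\phi$ constructed above, provided that $\phi$ is well defined.

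The substance of the proof is therefore existence, namely that $\phi(R)$ is independent of the chosen path $\gamma$. Two paths from $R_0$ to $R$ differ by a closed loop in $\mathbb{R}^2 \setminus \{\text{crossings of } D\}$, so it suffices to show the net transformation (holonomy) of $x_0$ around any such loop is the identity. Since $\mathbb{R}^2$ is simply connected, the fundamental group of the plane punctured at the crossings is generated by small meridian loops encircling a single crossing; a loop enclosing no crossing meets each arc an even number of times in cancelling pairs, contributing the identity by bijectivity. It then remains to verify triviality of the holonomy around one classical crossing and around one singular crossing, and these are exactly the two equalities displayed in Figure~\ref{shadowX}: going around a classical crossing compares $(x\cdot s_1)\cdot s_2$ with $(x\cdot s_2)\cdot(s_1 * s_2)$, which agree by the first $S$-set relation, while going around a singular crossing compares $(x\cdot s_1)\cdot s_2$ with $(x\cdot R_1(s_1,s_2))\cdot R_2(s_1,s_2)$, which agree by the second $S$-set relation. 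Consequently the holonomy around every generator is trivial, $\phi$ is well defined, and $f \times \phi$ is a genuine $(S,X)$-coloring. The main obstacle is precisely this reduction from global consistency to the per-crossing checks: one must argue that the loop space of the region adjacency structure is generated by the crossing meridians and track the orientation conventions (and the use of inverses) so that each local comparison lands exactly on one of the two $S$-set relations; the algebra itself is then supplied by the definition of an $S$-set.
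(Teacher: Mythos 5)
Your proof is correct and follows essentially the same route as the paper's: fix the $S$-coloring, assign an arbitrary element of $X$ to one chosen region, and propagate the region colors by the rule of Figure~\ref{shadowrule}, with well-definedness at crossings guaranteed by the two $S$-set axioms (the equalities of Figure~\ref{shadowX}). The only difference is one of rigor, not of method: the paper asserts the propagation is consistent and points to Figure~\ref{shadowX}, whereas you make this explicit via the holonomy argument (path-independence reduced to triviality of meridian loops around classical and singular crossings), which is exactly the detail the paper leaves implicit.
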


\begin{proof}
Consider a singquandle coloring  of $L$ and choose a region of $L$. Any element of $X$ can be assigned to the chosen region, and any choice determines a unique shadow color of each region by following the rule in Figure~\ref{shadowrule}.
\end{proof}

\begin{definition}
 Let $L$ be singular link diagram and $(S,X)$ is a singquandle shadow. The \emph{shadow counting invariant}, $\#\textup{Col}_{(S,X)}(L)$, is the number of shadow colorings of $L$ by $(S,X)$.
\end{definition}

\begin{example}
Let $(S,*,R_1,R_2)$ be the singquandle with $S=\mathbb{Z}_{10}$ and operations defined by $x*y = 3x-2y$, $x \bar{*} y = 7 x - 6 y$, $R_1(x,y) = 4x+6y$, and $R_2(x,y) = 8x+2y$. We can define a shadow structure by $X=\mathbb{Z}_4$ with the map $\cdot s: \mathbb{Z}_4  \rightarrow \mathbb{Z}_4$ for each $s \in S$ defined by $x\cdot s= 2+x + 2x^2$. We also represent this shadow operation by the shadow matrix
\[
\left[
\begin{array}{cccccccccc}
 1 & 1 & 1 & 1 & 1 & 1 & 1 & 1 & 1 & 1 \\
 0 & 0 & 0 & 0 & 0 & 0 & 0 & 0 & 0 & 0 \\
 3 & 3 & 3 & 3 & 3 & 3 & 3 & 3 & 3 & 3 \\
 2 & 2 & 2 & 2 & 2 & 2 & 2 & 2 & 2 & 2 \\
\end{array}
\right].
\]
We will compute a shadow coloring for the following \emph{2 bouquet graph of type $K$} listed as $3_1^k$ in \cite{Oyamaguchi}.
\begin{figure}[h]
\begin{tikzpicture}[scale=.6,use Hobby shortcut,add arrow/.style={postaction={decorate}, decoration={
  markings,
  mark=at position 1 with {\arrow[scale=2,>=stealth]{<}}}}]
\begin{knot}[
  consider self intersections=true,
  ignore endpoint intersections=false,
 flip crossing/.list={4},
  clip width=5,
  only when rendering/.style={
  }
  ]
\strand ([closed]0,1.5)..(-1.2,-1.5).. (3,-3.5) ..(0,-1.2) ..(-3,-3.5) ..(1.2,-1.5)..(0,1.5)[add arrow];
\end{knot}

\node[circle,draw=black, fill=black, inner sep=0pt,minimum size=8pt] (a) at (1.24,-1.35) {};
\node[above] at (0,-1) {\tiny $s_2$};
\node[above] at (0,1.5) {\tiny $s_4$};
\node[right] at (3,-3.5) {\tiny $s_3$};
\node[left] at (-3,-3.5) {\tiny $s_1$};

\draw (0,-2) node [draw] {\tiny $x_i$};
\draw (-2,-3) node [draw] {\tiny $x_i \cdot s_4$};
\draw (2,-3) node [draw] {\tiny $x_i \cdot s_1$};
\draw (0,0) node [draw] {\tiny $x_i  \cdot s_2$};
\draw (-5,-2.8) node [draw] {\tiny $(x_i \cdot s_4) \cdot s_1$};
\draw (5,-2.8) node [draw] {\tiny $(x_i \cdot s_1) \cdot s_3$};
\draw (0,2.5) node [draw] {\tiny $(x_i  \cdot s_2) \cdot s_4$};

\end{tikzpicture}
		\caption{Shadow coloring of $3_1^k$.}
		\label{shadowdiagram}
\end{figure}
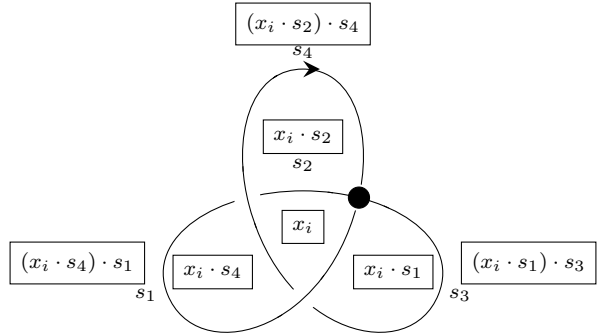
This singular knot has one coloring by $S$ given by
\[\textup{Hom}(\mathcal{SQ}(3_1^k),S) =\lbrace (s_1\to 0,s_2 \to 0, s_3 \to 0, s_4 \to 0)\rbrace. \]
For this coloring we also obtain one shadow coloring for each element of $X$. Therefore, we have the following four shadow colorings by the above shadow singquandle.

\begin{figure}[h]
\centering
\begin{tikzpicture}[scale=.4,use Hobby shortcut,add arrow/.style={postaction={decorate}, decoration={
  markings,
  mark=at position 1 with {\arrow[scale=2,>=stealth]{<}}}}]
\begin{knot}[
  consider self intersections=true,
  ignore endpoint intersections=false,
 flip crossing/.list={4},
  clip width=5,
  only when rendering/.style={
  }
  ]
\strand ([closed]0,1.5)..(-1.2,-1.5).. (3,-3.5) ..(0,-1.2) ..(-3,-3.5) ..(1.2,-1.5)..(0,1.5)[add arrow];
\end{knot}

\node[circle,draw=black, fill=black, inner sep=0pt,minimum size=8pt] (a) at (1.24,-1.35) {};
\node[above] at (0,-1) {\tiny $0$};
\node[above] at (0,1.5) {\tiny $0$};
\node[right] at (3,-3.5) {\tiny $0$};
\node[left] at (-3,-3.5) {\tiny $0$};

\draw (0,-2) node [draw] {\tiny $1$};
\draw (-2,-3) node [draw] {\tiny $1$};
\draw (2,-3) node [draw] {\tiny $1$};
\draw (0,.5) node [draw] {\tiny $1$};
\draw (-4.5,-3) node [draw] {\tiny $1$};
\end{tikzpicture}
\hspace{.2cm}
\begin{tikzpicture}[scale=.4,use Hobby shortcut,add arrow/.style={postaction={decorate}, decoration={
  markings,
  mark=at position 1 with {\arrow[scale=2,>=stealth]{<}}}}]
\begin{knot}[
  consider self intersections=true,
  ignore endpoint intersections=false,
 flip crossing/.list={4},
  clip width=5,
  only when rendering/.style={
  }
  ]
\strand ([closed]0,1.5)..(-1.2,-1.5).. (3,-3.5) ..(0,-1.2) ..(-3,-3.5) ..(1.2,-1.5)..(0,1.5)[add arrow];
\end{knot}

\node[circle,draw=black, fill=black, inner sep=0pt,minimum size=8pt] (a) at (1.24,-1.35) {};
\node[above] at (0,-1) {\tiny $0$};
\node[above] at (0,1.5) {\tiny $0$};
\node[right] at (3,-3.5) {\tiny $0$};
\node[left] at (-3,-3.5) {\tiny $0$};

\draw (0,-2) node [draw] {\tiny $2$};
\draw (-2,-3) node [draw] {\tiny $0$};
\draw (2,-3) node [draw] {\tiny $0$};
\draw (0,.5) node [draw] {\tiny $0$};
\draw (-4.5,-3) node [draw] {\tiny $2$};

\end{tikzpicture}
\hspace{.2cm}
\begin{tikzpicture}[scale=.4,use Hobby shortcut,add arrow/.style={postaction={decorate}, decoration={
  markings,
  mark=at position 1 with {\arrow[scale=2,>=stealth]{<}}}}]
\begin{knot}[
  consider self intersections=true,
  ignore endpoint intersections=false,
 flip crossing/.list={4},
  clip width=5,
  only when rendering/.style={
  }
  ]
\strand ([closed]0,1.5)..(-1.2,-1.5).. (3,-3.5) ..(0,-1.2) ..(-3,-3.5) ..(1.2,-1.5)..(0,1.5)[add arrow];
\end{knot}

\node[circle,draw=black, fill=black, inner sep=0pt,minimum size=8pt] (a) at (1.24,-1.35) {};
\node[above] at (0,-1) {\tiny $0$};
\node[above] at (0,1.5) {\tiny $0$};
\node[right] at (3,-3.5) {\tiny $0$};
\node[left] at (-3,-3.5) {\tiny $0$};

\draw (0,-2) node [draw] {\tiny $3$};
\draw (-2,-3) node [draw] {\tiny $3$};
\draw (2,-3) node [draw] {\tiny $3$};
\draw (0,.5) node [draw] {\tiny $3$};
\draw (-4.5,-3) node [draw] {\tiny $3$};

\end{tikzpicture}
\hspace{.2cm}
\begin{tikzpicture}[scale=.4,use Hobby shortcut,add arrow/.style={postaction={decorate}, decoration={
  markings,
  mark=at position 1 with {\arrow[scale=2,>=stealth]{<}}}}]
\begin{knot}[
  consider self intersections=true,
  ignore endpoint intersections=false,
 flip crossing/.list={4},
  clip width=5,
  only when rendering/.style={
  }
  ]
\strand ([closed]0,1.5)..(-1.2,-1.5).. (3,-3.5) ..(0,-1.2) ..(-3,-3.5) ..(1.2,-1.5)..(0,1.5)[add arrow];
\end{knot}

\node[circle,draw=black, fill=black, inner sep=0pt,minimum size=8pt] (a) at (1.24,-1.35) {};
\node[above] at (0,-1) {\tiny $0$};
\node[above] at (0,1.5) {\tiny $0$};
\node[right] at (3,-3.5) {\tiny $0$};
\node[left] at (-3,-3.5) {\tiny $0$};

\draw (0,-2) node [draw] {\tiny $0$};
\draw (-2,-3) node [draw] {\tiny $2$};
\draw (2,-3) node [draw] {\tiny $2$};
\draw (0,.5) node [draw] {\tiny $2$};
\draw (-4.5,-3) node [draw] {\tiny $0$};

\end{tikzpicture}
		\caption{Shadow colorings of $3_1^k$ by singquandle shadow $X$.}
		\label{test}
\end{figure}
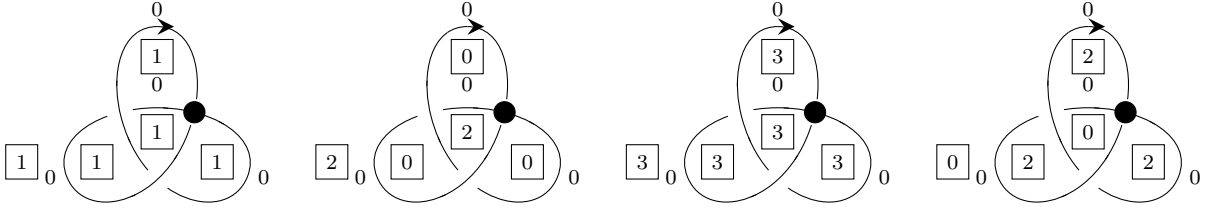
\noindent Therefore, $\#\textup{Col}_{(S,X)}(3_1^k) = 4.$
\end{example}

The following corollary implies that the shadow counting invariant does not contain any information not contained by the singquandle counting invariant. We obtain the following result by noticing that for each coloring of an oriented singular link by an oriented singquandle we have a different shadow coloring for each element in the $S$-set. 

\begin{corollary}\label{singandshadow}
The shadow counting invariant of a singular link $L$ by the $S$-shadow $(S,X)$ is given by 
\[\# \textup{Col}_{(S,X)}(L) = \vert X \vert \, \# \textup{Col}_S(L) ,\]
where $\#\textup{Col}_S(L)$ is the singquandle counting invariant.
\end{corollary}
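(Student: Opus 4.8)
The plan is to exhibit an explicit bijection between the set of shadow colorings of $L$ by $(S,X)$ and the Cartesian product $\textup{Hom}(\mathcal{SQ}(L),S) \times X$; the claimed identity then follows immediately, since $\#\textup{Col}_S(L) = \vert \textup{Hom}(\mathcal{SQ}(L),S)\vert$ and the product set has cardinality $\vert X\vert \, \#\textup{Col}_S(L)$. Essentially all of the work has already been carried out in the preceding Proposition, so the argument amounts to repackaging its existence-and-uniqueness content as a counting statement.

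First I would fix once and for all a base region $R_0 \in \mathcal{R}(D)$. Given any shadow coloring $f \times \phi$ of $D$, its underlying arc coloring $f$ is by definition an $S$-coloring, i.e.\ an element of $\textup{Hom}(\mathcal{SQ}(L),S)$, while $\phi(R_0)$ is an element of $X$. I would then define the map $\Psi(f \times \phi) = (f, \phi(R_0))$ from the set of shadow colorings to $\textup{Hom}(\mathcal{SQ}(L),S) \times X$, and verify it is a bijection.

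Both directions are direct consequences of the Proposition. For surjectivity: given a pair $(f,x)$, the existence half of the Proposition produces a shadow coloring whose underlying $S$-coloring is $f$ and whose value on $R_0$ is $x$, so $(f,x)$ lies in the image of $\Psi$. For injectivity: suppose two shadow colorings $f \times \phi$ and $f \times \phi'$ share the same underlying coloring $f$ and satisfy $\phi(R_0) = \phi'(R_0)$. The uniqueness half of the Proposition states that a shadow coloring is completely determined by its underlying $S$-coloring together with the color assigned to a single chosen region; hence $\phi = \phi'$ and the two colorings coincide. Thus $\Psi$ is a bijection and $\#\textup{Col}_{(S,X)}(L) = \vert X\vert \, \#\textup{Col}_S(L)$.

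The only genuinely substantive point — that fixing the color of $R_0$ propagates to a well-defined coloring of every other region, independent of the sequence of arcs crossed — is exactly what the $S$-set axioms are designed to guarantee, and it is already established in the Proposition. Concretely, any two regions of $\mathbb{R}^2 \setminus D$ are joined by a path crossing finitely many arcs, and the two $S$-set identities (illustrated in Figure~\ref{shadowX}) ensure that crossing those arcs in different admissible orders yields the same region color. Consequently I expect no new computation to be required here: the main care is simply in invoking the Proposition correctly and in noting that restriction to the base region gives the inverse of the propagation construction.
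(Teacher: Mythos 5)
Your proof is correct and follows essentially the same route as the paper: the paper derives the corollary immediately from the preceding Proposition (each $S$-coloring together with a choice of element of $X$ for one region determines exactly one shadow coloring), and your argument simply makes the implicit bijection $\Psi(f\times\phi)=(f,\phi(R_0))$ explicit, with surjectivity and injectivity given by the existence and uniqueness halves of that Proposition. No discrepancy to report.
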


We can define the following polynomial for a sinquandle shadow to obtain a singquandle shadow invariant. 
\begin{definition}
The \emph{shadow singquandle polynomial}, denoted by $\textup{sp}(S,X)$, of the shadow singquandle $(S,X,*,R_1,R_2,\cdot)$ is the sum 
\[ \textup{sp}(S,X) = \sum_{x \in X} t^{r(x)},\]
where $r(x) = \vert \lbrace s \in S \, ; \, x \cdot s = x \rbrace\vert$. Furthermore, If $(S^{\prime},Y)$ is a subshadow of $(S,X)$, then the \emph{subshadow singquandle polynomial} of $(S^{\prime},Y)$ is 
\[ \textup{Subsp} ((S',Y) \subset (S,X)) = \sum_{x \in Y} t^{r(x)}, \]
where $r(x) =\vert \lbrace s' \in S' \, ; \, x \cdot s' = x \rbrace \vert$.
\end{definition}

\begin{example}
Let $S= \mathbb{Z}_6 =\{ 1,2,3,4,5,0\}$ with singquandle operations defined by $x* y = 5x-4y = x\bar{*}y$, $R_1(x,y)=2x+y$ and $R_2(x,y)= 5x+4y$. Consider the shadow structure defined by $X = \mathbb{Z}_2 = \{1,0\}$ with shadow matrix 
\[ \left[
\begin{array}{cccccc}
 1 & 1 & 1 & 1 & 1 & 1 \\
 0 & 0 & 0 & 0 & 0 & 0 \\
\end{array}
\right].\]
 Note that we can compute $r(x)$ for each $x \in X$ by going through the row of the shadow matrix and counting the occurrences of the row number. Therefore, $r(1)=6$ and $r(0) = 6$, and the shadow singquandle polynomial of $(S,X)$ is
\[ \textup{sp}(S,X) = 2t^6. \]
We will consider two types of subshadows. We will first consider a subset of $X$ closed under the action of $S$. When we consider the subshadow $(S, Y) \subset (S,X)$, where $Y = \{1\}$, note that we can check from the shadow matrix that $Y$ is closed under the action of $S$. The subshadow $(S,Y)$ has the following subshadow singquandle polynomial 
\[ \textup{Subsp}(S,Y) = t^6.\]
Next, we consider the subshadow $(S',Y) \subset (S,X)$, where $S'$ is the subsingquandle consisting of $\{2,4,0\}$ and $Y = \{1 \}$. A straightforward computation shows that $S^\prime$ is closed under the singquandle operations. Furthermore, we can check from the shadow matrix that $Y$ is closed under the action of $S^\prime$. The subshadow $(S',Y)$ has the following subshadow singquandle polynomial,
\[ \textup{Subsp}(S^\prime,Y)= t^3. \]
\end{example}
We now prove that the shadow singquandle polynomial is an invariant of shadow singquandles.

\begin{proposition} 
Let $(S,X)$ and $(S',Y)$ be two singquandle shadows.
If $(S,X)$ and $(S',Y)$  are isomorphic, then they have equal shadow polynomials, $\textup{sp}(S,X) = \textup{sp}(S',Y)$.
\end{proposition}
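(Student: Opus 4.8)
The plan is to exploit the defining equivariance property of a singquandle shadow isomorphism to show that the exponent $r(x)$ is preserved pointwise under the bijection $\phi$, and then re-index the defining sum. Concretely, let $(\phi,f)$ be a singquandle shadow isomorphism, so that $\phi:X\to Y$ and $f:S\to S'$ are bijections with $f$ a singquandle isomorphism and $\phi(x\cdot s)=\phi(x)\bullet f(s)$ for all $x\in X$, $s\in S$ (this is equation~(\ref{action})). Write $r'(y)=\vert\lbrace s'\in S' \, ; \, y\bullet s'=y\rbrace\vert$ for the stabilizer count on the $Y$-side, so that $\textup{sp}(S',Y)=\sum_{y\in Y}t^{r'(y)}$.

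The key step is to prove that $r(x)=r'(\phi(x))$ for every $x\in X$. I would establish this by showing that $f$ restricts to a bijection between the stabilizer sets $\mathrm{Stab}(x):=\lbrace s\in S \, ; \, x\cdot s=x\rbrace$ and $\mathrm{Stab}(\phi(x)):=\lbrace s'\in S' \, ; \, \phi(x)\bullet s'=\phi(x)\rbrace$. For the forward direction, if $x\cdot s=x$ then applying $\phi$ and using the equivariance~(\ref{action}) gives $\phi(x)=\phi(x\cdot s)=\phi(x)\bullet f(s)$, so $f(s)\in\mathrm{Stab}(\phi(x))$. Conversely, if $\phi(x)\bullet f(s)=\phi(x)$ then $\phi(x\cdot s)=\phi(x)$, and injectivity of $\phi$ forces $x\cdot s=x$. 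Since $f$ is a bijection $S\to S'$, every element of $\mathrm{Stab}(\phi(x))$ has the form $f(s)$, so this correspondence is onto; hence $f$ restricts to a bijection and $r(x)=\vert\mathrm{Stab}(x)\vert=\vert\mathrm{Stab}(\phi(x))\vert=r'(\phi(x))$.

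Finally, re-indexing the sum along the bijection $\phi$ completes the argument:
\[ \textup{sp}(S,X)=\sum_{x\in X}t^{r(x)}=\sum_{x\in X}t^{r'(\phi(x))}=\sum_{y\in Y}t^{r'(y)}=\textup{sp}(S',Y). \]

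I do not expect any genuine obstacle here: the entire content is the pointwise stabilizer bijection, which follows immediately from equivariance together with the bijectivity of $f$ and $\phi$. The only points that require care are that the action symbols differ on the two sides ($\cdot$ versus $\bullet$), and that $f$ must carry the $S$-stabilizer \emph{onto} the $S'$-stabilizer rather than merely into it — and this is precisely where surjectivity of $f$ is used.
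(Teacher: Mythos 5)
Your proof is correct and follows essentially the same route as the paper's: the paper also asserts $r(\phi(x))=r(x)$ for the isomorphism pair $(\phi,f)$ and concludes by matching contributions across the bijection $\phi$. You simply spell out the stabilizer bijection (equivariance plus injectivity of $\phi$ for one inclusion, surjectivity of $f$ for the other) that the paper leaves implicit.
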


\begin{proof}


Suppose the pair $\phi: X \rightarrow Y$ and $f: S \rightarrow S'$ is a shadow singquandle isomorphism. Then $r(\phi(x)) = r(x)$ and the contribution to $\textup{sp}(S,X)$ from $x \in X$ is the same as the contribution of $\phi(x) \in Y$ to $\textup{sp}(S',Y)$. Since $\phi$ and $f$ are bijective maps satisfying equations~(\ref{3.6}), (\ref{3.7}), (\ref{3.8}) and (\ref{action}), the result follows.
\end{proof}

The shadow polynomial can be used to distinguish and classify shadow singqundles. In the following example, we distinguish two shadow singquandles.
\begin{example}
Let $(S,*,R_1,R_2)$ be a singquandle with $S= \mathbb{Z}_8$, $x*y= 5x-4y = x \bar{*} y$, $R_1(x,y) = 3 x + 4 y$, and $R_2(x,y)= 4x+3y$. Let $X = \mathbb{Z}_4$ with shadow matrix 
\[ \left[
\begin{array}{cccccccc}
 0 & 1 & 0 & 1 & 0 & 1 & 0 & 1 \\
 1 & 2 & 1 & 2 & 1 & 2 & 1 & 2 \\
 2 & 3 & 2 & 3 & 2 & 3 & 2 & 3 \\
 3 & 0 & 3 & 0 & 3 & 0 & 3 & 0 \\
\end{array}
\right]. \]
The $(S,X, *, R_1,R_2,\cdot)$ is a singquandle shadow with shadow polynomial 
\[ \textup{sp}(S,X) = 4t^4. \]
Let $W = \mathbb{Z}_4$ with shadow matrix
\[ \left[
\begin{array}{cccccccc}
 3 & 3 & 3 & 3 & 3 & 3 & 3 & 3 \\
 2 & 2 & 2 & 2 & 2 & 2 & 2 & 2 \\
 1 & 1 & 1 & 1 & 1 & 1 & 1 & 1 \\
 0 & 0 & 0 & 0 & 0 & 0 & 0 & 0 \\
\end{array}
\right]. \]
The $(S,W, *, R_1,R_2,\bullet )$ is a singquandle shadow with shadow polynomial 
\[ \textup{sp}(S,W) = 2+2t^8. \]
\end{example}

We see that the shadow singquandle polynomial is an effective invariant of singquandle shadows. 

In this section, we see that by simply computing the shadow counting invariant of an oriented singular link we do not obtain any more information than that obtained from the singquandle counting invariant. Therefore, in the following section we will enhance the shadow counting invariant in order to obtain a stronger invariant. 

\section{Enhanced Shadow Counting Invariant }\label{enhanc}

In this section, we will jazz up the shadow counting invariant from the previous section. We will combine the $S$-shadow counting invariant and the shadow polynomial in order to define an enhanced shadow singquandle invariant for singular link. 

\begin{definition}
Let $f \times \phi$ be a shadow coloring of an oriented singular link diagram $D$. The closure of the set of shadow colors under the action of the image subsingquandle $\textup{Im}(f) \subset S$ of $f \times \phi$ is a subshadow called the \emph{shadow image} of $f \times \phi$, which we denote by $\textup{om}(f \times \phi)$\footnote{The choice of $\textup{om}(f \times \phi)$ to denote the shadow image of $f \times \phi$ was derived from the french word \emph{ombre} for 
shadow}.
\end{definition}


\begin{definition}
Let $(S,X)$ be an $S$-shadow and let $L$ be an oriented singular link with diagram $D$. The \emph{singquandle shadow polynomial invariant} of $L$ with respect $(S,X)$ is 
\[ SP(L) = \sum_{f \times \phi \in \textup{shadow coloring}} u^{\textup{Subsp}  (\textup{om}(f \times \phi) \subset (S,X))}.\]
\end{definition}

\section{Examples}\label{examples}
In this section, we present two examples in which we show that the shadow sinquandle polynomial is an enhancement of the singquandle counting invariant. In the first example, we include a pair of singular knots with the same singquandle counting invariant and the same singquandle polynomial but are distinguished by the singquandle shadow polynomial invariant.  The computations were performed by \emph{Mathematica} and \emph{python} independently and checked by hand.
\begin{example}
Let $(S,X,*,R_1,R_2,\cdot)$ be the shadow singquandle with $S=\mathbb{Z}_8$, $X=\mathbb{Z}_6$ and operations $x*y = 3x-2y = x \bar{*} y$, $R_1(x,y) = 7x+6y$, $R_2(x,y) = 2x+3y$, and shadow matrix 
\[ \left[
\begin{array}{cccccccc}
 4 & 1 & 4 & 1 & 4 & 1 & 4 & 1 \\
 5 & 2 & 5 & 2 & 5 & 2 & 5 & 2 \\
 0 & 3 & 0 & 3 & 0 & 3 & 0 & 3 \\
 1 & 4 & 1 & 4 & 1 & 4 & 1 & 4 \\
 2 & 5 & 2 & 5 & 2 & 5 & 2 & 5 \\
 3 & 0 & 3 & 0 & 3 & 0 & 3 & 0 \\
\end{array}
\right]. \]

The following two \emph{2 bouquet graph of type $K$} listed as $4_1^k$ and $5_4^k$ in \cite{Oyamaguchi}. We obtain the following coloring equations from the singular knot $4_1^k$, 
\begin{eqnarray*}
s_1 &=& s_6 \bar{*} s_2 =-2 s_2 + 3 s_6 \\ 
s_2 &=& s_5 \bar{*} s_6 = 3 s_5 - 2 s_6 \\ 
s_3 &=& R_1(s_1,s_2) = 7 s_1 + 6 s_2 \\ 
s_4 &=& R_2(s_1,s_2) =2 s_1 + 3 s_2 \\ 
s_5 &=& s_4 \bar{*}s_3 =-2 s_3 + 3 s_4 \\ 
s_6 &=& s_3 \bar{*} s_5 = 3 s_3 - 2 s_5.
\end{eqnarray*}
From these equations we obtain the colorings listed below. In the following list we identify each coloring $f \in \textup{Hom}(\mathcal{SQ}(4_1^k),S)$ with the $6$-tuple $(f(s_1),f(s_2),f(s_3),f(s_4),f(s_5),f(s_6))$.
\begin{equation*}
\begin{split}
\textup{Hom}(\mathcal{SQ}(4_1^k),S)=
\{(1, 7, 1, 7, 3, 5), (1, 3, 1, 3, 7, 5), (2, 2, 2, 2, 2, 2), (2, 6, 2, 6, 6, 2), \\
(3, 5, 3, 5, 1, 7), (3, 1, 3, 1, 5, 7), (4, 0, 4, 0, 0, 4),(4, 4, 4, 4, 4, 4), \\
(5, 3, 5, 3, 7, 1), (5, 7, 5, 7, 3, 1), (6, 6, 6, 6, 6, 6), (6, 2, 6, 2, 2, 6), \\
(7, 1, 7, 1, 5, 3), (7, 5, 7, 5, 1,3), (0, 4, 0, 4, 4, 0), (0, 0, 0, 0, 0, 0)\}.
\end{split}
\end{equation*}
We obtain the following coloring equations from the singular knot $5_4^k$, 
\begin{eqnarray*}
s_1 &=& s_7 \bar{*} s_5 = -2 s_5 + 3 s_7\\ 
s_2 &=& s_4 * s_6 = 3 s_4 - 2 s_6\\ 
s_3 &=& R_1(s_1,s_2) = 7 s_1 + 6 s_2 \\ 
s_4 &=& R_2(s_1,s_2) =2 s_1 + 3 s_2 \\ 
s_5 &=& s_3 \bar{*}s_7 = 3 s_3 - 2 s_7 \\ 
s_6 &=& s_5 * s_2 = 2 s_2 + 3 s_5\\
s_7 &=& s_6 \bar{*}s_3 =-2 s_3 + 3 s_6 .
\end{eqnarray*}
From these equations we obtain the colorings listed below. In the following list we identify each coloring $f \in \textup{Hom}(\mathcal{SQ}(5_4^k),S)$ with the $6$-tuple $(f(s_1),f(s_2),f(s_3),f(s_4),f(s_5),f(s_6))$.
\begin{equation*}
\begin{split}
\textup{Hom}(\mathcal{SQ}(5_4^k),S)=\{
(2, 2, 2, 2, 2, 2, 2), (2, 4, 6, 0, 6, 2, 2), (2, 6, 2, 6, 2, 2, 2), (2, 0, 6, 4, 6, 2, 2), \\
(4, 2, 0, 6, 0, 4, 4), (4, 4, 4, 4, 4, 4, 4), (4, 6, 0, 2, 0, 4, 4), (4, 0, 4, 0, 4, 4, 4),\\
(6, 2, 6, 2, 6, 6, 6), (6, 4, 2, 0, 2, 6, 6), (6, 6, 6, 6, 6, 6, 6), (6, 0, 2, 4, 2, 6, 6), \\
(0, 2, 4, 6, 4, 0, 0), (0, 4, 0, 4, 0, 0, 0), (0, 6, 4, 2, 4, 0, 0), 
(0, 0, 0, 0, 0, 0, 0) \}.
\end{split}
\end{equation*}
Therefore, both singular knots have the same singquandle counting invariant $\#\textup{Col}_S(4_1^k)  = 16 =  \#\textup{Col}_S(5_4^k)$. Therefore, by Theorem~\ref{singandshadow} we obtain that the two singular knots have the same shadow counting invariant $\#\textup{Col}_{(S,X)}(4_1^k)  = 96 = \#\textup{Col}_{(S,X)}(5_4^k)$. Furthermore, the two singular knots have the same singquandle polynomial $\phi_{Ssqp}(4_1^k)=4 u^{s_1^2 t_1^2 s_2^2 t_2^2 s_3 t_3}+4 u^{2 s_1^2 t_1^2 s_2^2 t_2^2 s_3
   t_3}+8 u^{4 s_1^2 t_1^2 s_2^2 t_2^2 s_3 t_3} =\phi_{Ssqp}(5_4^k) $. However, the singquandle shadow polynomial invariant distinguishes the two singular knots:

\begin{figure}[h]
    \centering

\begin{tikzpicture}[use Hobby shortcut,scale=1,add arrow/.style={postaction={decorate}, decoration={
  markings,
  mark=at position .5 with {\arrow[scale=1.5,>=stealth]{<}}}}]
\begin{knot}[
  consider self intersections=true,
clip width=3,
  flip crossing/.list={2,4,6}
]
\strand ([closed]90:2) foreach \k in {1,...,5} { .. (90-360/5+\k*720/5:1.5) .. (90+\k*720/5:2) } (90:2)[add arrow];
\end{knot}

\node[circle,draw=black, fill=black, inner sep=0pt,minimum size=6pt] (a) at (1,1.35) {};

\node[above] at (0,1) {\tiny $s_1$};
\node[above] at (0,2) {\tiny $s_2$};
\node[left] at (1.5,.5) {\tiny $s_3$};
\node[right] at (2,.5) {\tiny $s_4$};
\node[left] at (-1.5,-1.5) {\tiny $s_5$};
\node[left] at (-2,.5) {\tiny $s_6$};

\draw (0,0) node [draw] {\tiny $x$};

\node[below] at (0,-2.5){$SP(4_1^k)= 24 u^{t^2}+24 u^t+48 u^2$};

\end{tikzpicture}
\hspace{1in}
\begin{tikzpicture}[use Hobby shortcut,scale=1,add arrow/.style={postaction={decorate}, decoration={
  markings,
  mark=at position .5 with {\arrow[scale=1.5,>=stealth]{>}}}}]
\begin{knot}[
  consider self intersections=true,
  ignore endpoint intersections=false,
 flip crossing/.list={7,4,9},
  clip width=4,
  only when rendering/.style={
  }
  ]
\strand ([closed]0,2)..(-.5,1.75)..(0,0)..(.5,-.2)..(2,-2)..(-.5,-1)..(-2,1.5)..(0,1.5)..(2,1.5)..(.5,-1)..(-2,-2)..(-.5,-.2)..(0,0)..(.5,1.75)..(0,2)[add arrow];

\end{knot}

\node[circle,draw=black, fill=black, inner sep=0pt,minimum size=6pt] (a) at (0,0) {};
\node[right] at (-1,0) {\tiny $s_1$};
\node[right] at (-1,.5) {\tiny $s_2$};
\node[right] at (.5,0) {\tiny $s_3$};
\node[right] at (.5,.5) {\tiny $s_4$};
\node[left] at (-2,.5) {\tiny $s_5$};
\node[right] at (2,.5) {\tiny $s_6$};
\node[left] at (-2,-1) {\tiny $s_7$};

\draw (0,-.5) node [draw] {\tiny $x$};

\node[below] at (0,-2.5){$SP(5_4^k)=48 u^{t^4}+24 u^{t^2}+24 u^t $};

\end{tikzpicture}    
    \caption{Singular knots $4_1^k$ and $5_4^k$ and corresponding $SP$ invariant.}
    \label{Example1}
\end{figure}
\end{example}

\begin{example}
Let $(S,X,*,R_1,R_2,\cdot)$ with $S=\mathbb{Z}_{12}$, $X=\mathbb{Z}_8$, $x*y = 5x-4y = x \bar{*} y$, $R_1(x,y) = 5x+10y$, $R_2(x,y) = 2x+y$, and shadow matrix 
\[ \left[
\begin{array}{cccccccccccc}
 3 & 7 & 7 & 7 & 3 & 7 & 7 & 7 & 3 & 7 & 7 & 7 \\
 2 & 6 & 6 & 6 & 2 & 6 & 6 & 6 & 2 & 6 & 6 & 6 \\
 1 & 5 & 5 & 5 & 1 & 5 & 5 & 5 & 1 & 5 & 5 & 5 \\
 0 & 4 & 4 & 4 & 0 & 4 & 4 & 4 & 0 & 4 & 4 & 4 \\
 7 & 3 & 3 & 3 & 7 & 3 & 3 & 3 & 7 & 3 & 3 & 3 \\
 6 & 2 & 2 & 2 & 6 & 2 & 2 & 2 & 6 & 2 & 2 & 2 \\
 5 & 1 & 1 & 1 & 5 & 1 & 1 & 1 & 5 & 1 & 1 & 1 \\
 4 & 0 & 0 & 0 & 4 & 0 & 0 & 0 & 4 & 0 & 0 & 0 \\
\end{array}
\right]. \]
We compute the shadow polynomial for the following singular knots derived from the classical trefoil knot

\begin{figure}[h]
\centering
\begin{tikzpicture}[scale=.55,use Hobby shortcut]
\begin{knot}[
  consider self intersections=true,
  ignore endpoint intersections=false,
 flip crossing/.list={4},
  clip width=5,
  only when rendering/.style={
  }
  ]
\strand ([closed]0,1.5)[decoration={markings,mark=at position .5 with
    {\arrow[scale=3,>=stealth]{<}}},postaction={decorate}]..(-1.2,-1.5).. (3,-3.5) ..(0,-1.2) ..(-3,-3.5) ..(1.2,-1.5)..(0,1.5);
\end{knot}

\node[circle,draw=black, fill=black, inner sep=0pt,minimum size=8pt] (a) at (1.24,-1.35) {};
\node[above] at (0,-1) {\tiny $s_1$};
\node[above] at (0,1.5) {\tiny $s_2$};
\node[right] at (3,-3.5) {\tiny $s_3$};
\node[left] at (-3,-3.5) {\tiny $s_4$};

\draw (0,-2) node [draw] {\tiny $x$};
\draw (-2,-3) node [draw] {\tiny $x\cdot s_2$};
\draw (2,-3) node [draw] {\tiny $x \cdot s_4$};
\draw (0,.5) node [draw] {\tiny $x \cdot s_1$};
\draw (-4.5,0) node [draw] {\tiny $(x \cdot s_1)\cdot s_2$};

\node at (0,-5) {$K_1$};

\end{tikzpicture}
\hspace{1in}
\begin{tikzpicture}[scale=.55,use Hobby shortcut]
\begin{knot}[
  consider self intersections=true,
  ignore endpoint intersections=false,
 flip crossing/.list={4},
  clip width=5,
  only when rendering/.style={
  }
  ]
\strand ([closed]0,1.5)[decoration={markings,mark=at position .5 with
    {\arrow[scale=3,>=stealth]{<}}},postaction={decorate}]..(-1.2,-1.5).. (3,-3.5) ..(0,-1.2) ..(-3,-3.5) ..(1.2,-1.5)..(0,1.5);
\end{knot}

\node[circle,draw=black, fill=black, inner sep=0pt,minimum size=8pt] (a) at (-1.24,-1.35) {};
\node[circle,draw=black, fill=black, inner sep=0pt,minimum size=8pt] (a) at (1.24,-1.35) {};

\node[above] at (0,-1) {\tiny $s_1$};
\node[above] at (0,1.5) {\tiny $s_2$};
\node[right] at (3,-3.5) {\tiny $s_3$};
\node[left] at (-3,-3.5) {\tiny $s_4$};
\node[right] at (-.55,-2.9) {\tiny $s_5$};

\draw (0,-2) node [draw] {\tiny $x$};
\draw (-2,-3) node [draw] {\tiny $x\cdot s_5$};
\draw (2,-3) node [draw] {\tiny $x \cdot s_4$};
\draw (0,.5) node [draw] {\tiny $x \cdot s_1$};
\draw (-4.5,0) node [draw] {\tiny $(x \cdot s_1)\cdot s_2$};

\node at (0,-5) {$K_2$};

\end{tikzpicture}

\begin{center}
\begin{tikzpicture}[scale=.55,use Hobby shortcut]
\begin{knot}[
  consider self intersections=true,
  ignore endpoint intersections=false,
 flip crossing/.list={4},
  clip width=5,
  only when rendering/.style={
  }
  ]
\strand ([closed]0,1.5)[decoration={markings,mark=at position .5 with
    {\arrow[scale=3,>=stealth]{<}}},postaction={decorate}]..(-1.2,-1.5).. (3,-3.5) ..(0,-1.2) ..(-3,-3.5) ..(1.2,-1.5)..(0,1.5);
\end{knot}

\node[circle,draw=black, fill=black, inner sep=0pt,minimum size=8pt] (a) at (-1.24,-1.35) {};
\node[circle,draw=black, fill=black, inner sep=0pt,minimum size=8pt] (a) at (1.24,-1.35) {};
\node[circle,draw=black, fill=black, inner sep=0pt,minimum size=8pt] (a) at (0,-3.6) {};

\node[above] at (0,-1) {\tiny $s_1$};
\node[above] at (0,1.5) {\tiny $s_2$};
\node[right] at (3,-3.5) {\tiny $s_3$};
\node[left] at (-3,-3.5) {\tiny $s_4$};
\node[right] at (-.55,-2.9) {\tiny $s_5$};
\node[right] at (1,-2.1) {\tiny $s_6$};

\draw (0,-2) node [draw] {\tiny $x$};
\draw (-2,-3) node [draw] {\tiny $x\cdot s_5$};
\draw (2,-3) node [draw] {\tiny $x \cdot s_6$};
\draw (0,.5) node [draw] {\tiny $x \cdot s_1$};
\draw (-4.5,0) node [draw] {\tiny $(x \cdot s_1)\cdot s_2$};

\node at (0,-5) {$K_3$};

\end{tikzpicture}
\end{center}
\hspace{1in}

		\caption{Colorings of the singular knots derived from the trefoil.}
		\label{test}
\end{figure}
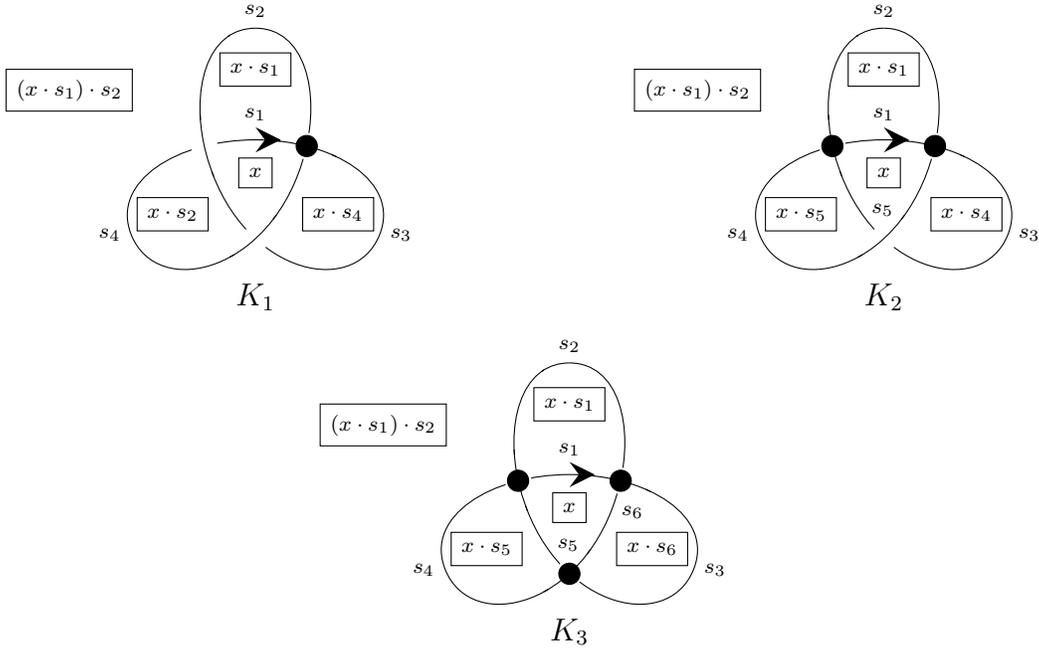

\begin{center}
\begin{tabular}{ c| c| c | l}
$\#\textup{Col}_S$ & $\#\textup{Col}_{(S,X)}$& $SP$ & \text{Singular knot} \\
\hline
4 & 32 & $4 u^{t^2}+4 u^t+24 u^2$ & $K_1$, $K_3$\\
 &     &   $4 u^t+8 u^{2 t}+8 u^3+12 u^2$ & $K_2$
\end{tabular}
\end{center}
In this example we have a collection of singular knots all with the same singquandle counting invariant with respect to $X$. If we then compute the shadow singquandle polynomial invariant we can distinguish $K_2$ from $K_1$ and $K_3$. 
\end{example}

\section{Acknowledgments}
The authors of this paper would like to thank Sam Nelson for fruitful discussion. The authors would also like to thank Hamza Elhamdadi for providing python code to verify the examples in Section 7.

\end{document}